\newtheorem {thm}   {Theorem}
\newtheorem* {thm*}   {Theorem}
\newtheorem* {prp*}   {Proposition}
\newtheorem {lem}      [thm]    {Lemma}
\newtheorem {prp}[thm]  {Proposition}
\newtheorem {rmk} [thm]    {Remark}
\newcounter{AbcT}
\theoremstyle{definition}
\renewcommand{\a}{\alpha}
\renewcommand{\d}{\delta}
\newcommand{\e}{\varepsilon}
\newcommand{\g}{\gamma}
\renewcommand{\l}{\lambda}
\newcommand{\Z}{{\bf Z}}
\newcommand{\C}{{\bf C}}
\newcommand{\F}{{\bf F}}
\renewcommand{\P}{{\bf P}}
\newcommand {\cG} {{\mathcal G}}
\newcommand{\wt}{\widetilde}
\newcommand{\mat}[4]
{\left(
\begin{array}{cc}
#1 & #2 \\
#3 & #4
\end{array}
\right)}
\DeclareMathOperator{\SL}{SL}
\DeclareMathOperator{\PSL}{PSL}
\DeclareMathOperator{\Tr}{Tr}
\DeclareMathOperator{\Id}{Id}
\DeclareMathOperator{\Girth}{girth}
\title[Expansion of coset graphs]%
{Expansion of coset graphs of $\PSL_2(\F_p)$}
\author{P\'eter P. Varj\'u}
\thanks{
I gratefully acknowledge the support
of the Royal Society.
}
\keywords{}
\begin{document}

\begin{abstract}
Let $G$ be a finite group and let $H_1,H_2<G$ be two subgroups.
In this paper, we are concerned with the bipartite graph
whose vertices are $G/H_1\cup G/H_2$ and a coset $g_1H_1$ is connected
with another coset $g_2H_2$ if and only if $g_1H_1\cap g_2 H_2\neq\varnothing$.
The main result of the paper establishes the existence of such graphs with large girth
and large spectral gap. 
Lubotzky, Manning and Wilton use such graphs to construct certain infinite groups of interest
in geometric group theory.
\end{abstract}

\maketitle

\section{Introduction}

Let $G$ be a finite group and let $H_1,H_2<G$ be two subgroups.
In this paper, we are concerned with the bipartite graph $\cG(G;H_1,H_2)$
whose vertices are $G/H_1\cup G/H_2$ and a coset $g_1H_1$ is connected
with another coset $g_2H_2$ if and only if $g_1H_1\cap g_2 H_2\neq\varnothing$.
We write $V(G;H_1,H_2)$ and $E(G;H_1,H_2)$ for the sets of vertices and edges of that
graph.

The purpose of the paper is to show the existence of such graphs with large
girth and large spectral gap.
We introduce the relevant definitions.

We write $A(G;H_1,H_2)$ for the operator acting on $l^2(V(G;H_1,H_2))$
by
\[
A(G;H_1,H_2) f(v)=\frac{1}{|\{u:u\sim v\}|}\sum_{u:u\sim v} f(u),
\]
where $\sim$ is the adjacency relation in $\cG(G;H_1,H_2)$, that is,
we have $u\sim v$ if and only if $(u,v)\in E(G;H_1,H_2)$.
The matrix of $A(G;H_1,H_2)$ in the standard basis is the normalized adjacency matrix
of $\cG(G;H_1,H_2)$.
We write $l^2_0(V(G;H_1,H_2))$ for the space of functions whose average
is $0$ on both $G/H_1$ and $G/H_2$, and write
$A_0(G;H_1,H_2)$ for the restriction of $A(G;H_1,H_2)$ to this subspace.

We write $\Girth(G;H_1,H_2)$ for the length of the shortest non-trivial loop
in $\cG(G;H_1,H_2)$.

The main result of the paper is the following.

\begin{thm}\label{th:main}
There is an absolute constant $c>0$ such that the following holds.
Let $d\in\Z_{\ge3}$.
There is an arbitrarily large finite group $G$ and two subgroups $H_1,H_2<G$
such that the following holds:
\begin{enumerate}
\item both $H_1$ and $H_2$ are cyclic of order $d$, and $|H_1\cap H_2|=1$,
\item $\Girth(G;H_1,H_2)>c\log|G|/\log(d-1)$,
\item $\|A_0(G;H_1,H_2)\|<(d-1)^{-c}$.
\end{enumerate}
\end{thm}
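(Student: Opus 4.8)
The plan is to realize $G=\PSL_2(\F_p)$ for a suitable sequence of primes $p\to\infty$, with $H_1,H_2$ the reductions modulo $p$ of a fixed pair of order-$d$ elements of $\SL_2$ over a ring of algebraic integers. Concretely, I would fix two elliptic elements $\tilde a,\tilde b$ of order $d$ that generate a free product $\Z/d\Z * \Z/d\Z$ and are placed in general position, restrict to primes $p$ for which both reductions retain order $d$ and lie in tori meeting trivially, and set $H_1=\langle a\rangle$, $H_2=\langle b\rangle$. This makes $\cG(G;H_1,H_2)$ the congruence quotient $\overline{\ker(\Gamma\to G)}\backslash T$ of the $d$-regular bipartite tree $T$ (the Bass--Serre tree of $\Gamma=\Z/d\Z * \Z/d\Z$), hence a $d$-regular bipartite graph on $2|G|/d$ vertices with $|G|$ edges; item (1) is then a routine genericity statement, and letting $p$ vary produces arbitrarily large $G$.

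For the girth (item 2): a nontrivial closed reduced path of $n$ edges corresponds to an alternating word $h_1^{(1)}h_2^{(1)}\cdots$ of $n$ nontrivial syllables that is trivial in $G$ but, by freeness, nontrivial in $\SL_2$ over the integers. Such a word has matrix entries of height at most $M^{n}$, where $M$ is the per-syllable growth rate of the fixed generators, and a nonzero algebraic integer of height below $p$ cannot vanish modulo $p$; so no such relation exists while $M^{n}<p$. This yields $\Girth(G;H_1,H_2)\gtrsim \log p/\log M\gtrsim \log|G|/\log(d-1)$, and the delicate point is to push $M$ toward the information-theoretic minimum $\sqrt{d-1}$ (forced by the fact that $(d-1)^n$ reduced words must reduce to distinct matrices), so that the girth constant is driven up toward the Moore bound rather than the value a generic choice produces.

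The spectral estimate (item 3) is the heart of the matter, and I would obtain it by the trace method in the spirit of Sarnak--Xue. Write $\bar A$ for the normalized adjacency operator; for $2k$ below the girth every ball of radius $k$ is a tree, so each return probability equals its value on the $d$-regular tree, and Kesten's bound gives $(\bar A^{2k})_{vv}\le C\,k\,\big(2\sqrt{d-1}/d\big)^{2k}$, whence $\Tr(\bar A^{2k})\le |V|\,C\,k\,\big(2\sqrt{d-1}/d\big)^{2k}$. On the other hand $\bar A$ commutes with the left $G$-action, so every eigenspace is a $G$-representation; the eigenvalue $\|A_0(G;H_1,H_2)\|$ lives in a nontrivial irreducible representation of $\PSL_2(\F_p)$, all of which have dimension $\ge (p-1)/2\gg|G|^{1/3}$ by Frobenius, and therefore has multiplicity $\gg|G|^{1/3}$. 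Combining,
\[
|G|^{1/3}\,\|A_0(G;H_1,H_2)\|^{2k}\ \le\ \Tr(\bar A^{2k})\ \le\ |V|\,C\,k\,\Big(\tfrac{2\sqrt{d-1}}{d}\Big)^{2k}.
\]
Choosing $2k$ just below the girth and taking the $2k$-th root turns the density factor $(|V|/|G|^{1/3})^{1/2k}$ into a fixed power of $d-1$, which is overcome by the Kesten factor $\approx (d-1)^{-1/2}$ as soon as the girth constant exceeds the threshold set by the $1/3$ quasirandomness exponent, leaving $\|A_0(G;H_1,H_2)\|\le (d-1)^{-c}$.

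The main obstacle is exactly this numerical balance. Since Frobenius only furnishes multiplicities of size $|G|^{1/3}$, the tree approximation must survive up to a scale comparable to the full $\log_{d-1}|G|$, i.e.\ the construction must have girth constant safely above the value a naive height argument supplies; Alon--Boppana confirms one can do no better than a near-Ramanujan bound, so some genuinely near-extremal girth is unavoidable. Producing a single pair $(H_1,H_2)$ that is simultaneously in general position (item 1), of near-extremal girth (so that the trace method beats the $|G|^{1/3}$ density), and compatible with the return-probability estimate is where the real work lies. A purely generic choice of generators appears to fall short of the required girth, so I expect to need either a more arithmetic construction forcing $M$ close to $\sqrt{d-1}$, or a refinement replacing the crude top-eigenvalue multiplicity bound by a fuller accounting of the spectrum of $\PSL_2(\F_p)$.
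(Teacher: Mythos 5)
Your choices of $G=\PSL_2(\F_p)$, of a girth bound in the spirit of height/word-counting arguments, and of Frobenius's $(p-1)/2$ lower bound on dimensions of nontrivial representations all match the paper (which gets the girth from a random unipotent conjugate of a fixed order-$d$ torus rather than a characteristic-zero lift, but with the same outcome, a girth constant of roughly $1/3$ relative to $\log|G|/\log(d-1)$). The gap is in item (3): the pure Sarnak--Xue trace method you propose cannot close, and you have correctly located but underestimated the obstruction. Quantitatively, Kesten gives $\Tr(\bar A^{2k})\le C|V|\rho^{2k}$ with $\rho=2\sqrt{d-1}/d$ for $2k$ below the girth, and against multiplicity $m\gg |G|^{1/3}$ this yields $\lambda\le (|G|^{2/3})^{1/2k}\rho$, which is a nontrivial bound only when $2k>\tfrac43\log|G|/\log(d-1)$ (for large $d$; see below for small $d$). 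The Moore bound caps the girth at about $2\log|G|/\log(d-1)$, so you would need girth at least two thirds of the absolute optimum. Your own height argument cannot supply this even in the best conceivable case: a relation of $n$ alternating syllables forces $M^{nD}\ge p$ with $D$ the degree of the number field and $M\ge\sqrt{d-1}$ (since $(d-1)^{n}$ reduced words must stay distinct), so the method tops out at girth $\approx 2\log p/\log(d-1)=\tfrac23\log|G|/\log(d-1)$ --- short of the $4/3$ threshold by a factor of two, with no known construction of such coset graphs doing better. Worse, for small $d$ the approach fails regardless of girth: for $d=3$ one has $\rho=2\sqrt2/3=(d-1)^{-0.085\ldots}$, so even at the Moore bound $\rho^{2k}\ge|G|^{-0.17}$, which cannot overcome the density factor $|V|/m\approx|G|^{2/3}$. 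Your closing suggestion of ``a fuller accounting of the spectrum'' is exactly the missing ingredient, but it is not a refinement of the multiplicity count --- it is a different mechanism.

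What the paper does instead is run the Bourgain--Gamburd machine. The girth input is only needed at the weak scale $l\lesssim\tfrac{1}{3}\log_{d-1}p$ (in fact any positive constant would do), and its role is not to justify a tree approximation of $\Tr(\bar A^{2k})$ but to verify the non-concentration hypothesis $\chi_{S_0}^{*l}(gH)\le p^{-\e}$ for proper subgroups $H$ (Lemma \ref{lm:non-concentration}). The $L^2$-flattening inequality (Proposition \ref{pr:flattening}, resting on Helfgott's product theorem) then amplifies this to $\|\chi_{S_0}^{*Kl_0}\|_2\le Cp^{-3/2+1/20}$, and only at that point is the Frobenius multiplicity bound invoked, where it comfortably wins. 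Two further structural points you would also need to address: the reduction from the coset graph to a Cayley-type graph (Lemma \ref{lm:coset2Cayley}) produces the \emph{asymmetric} generating set $S=H_1H_2$, which has many short undirected relations (e.g.\ $s^{-1}t$ for $s,t\in S$ sharing an $H_1$-factor); this is why the paper must prove the stronger Proposition \ref{pr:girth} controlling solutions of a specific word $w$ in products of generators, rather than a bare girth bound. And your claim that the reductions of fixed elliptic elements $\tilde a,\tilde b$ generate a free product in characteristic zero needs care when both have finite order $d$ and you want trivial intersection of the reduced tori; the paper sidesteps this with the explicit one-parameter family $H_2=u(x)Hu(-x)$ and a counting argument over $x\in\F_p$.
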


The above result is motivated by a question asked by Henry Wilton to the author
about the existence of such graphs.
Lubotzky, Manning and Wilton use such graphs to construct certain infinite groups of interest
in geometric group theory.
We refer to their paper \cite{LMW-generalized-triangle-groups} for more details.

The constant $c$ in Theorem \ref{th:main} is effective.
It is very easy to see from the proof that $c$ in claim (2) can be taken $1/3-\e$ for any $\e>0$.
The value of $c$ in claim (3) could also be computed, but it would require significantly more effort
and would lead to a very small number.
Those interested in carrying out this task are advised to consult the paper \cite{Kow-Sl2}.

In the proof, the group $G$ will be chosen $\PSL_2(\F_p)$ for a sufficiently large
prime $p$ with $d|(p+1)/2$.
The groups $H_1$ and $H_2$ will be random conjugates of a fixed subgroup
by a one parameter unipotent subgroup.
The girth bound will be obtained by methods similar to those in \cite{GHSSV-girth}, while
the bound on the norm of $A_0(G;H_1,H_2)$ will be proved using the method
of Bourgain and Gamburd \cite{BG-prime}, which builds on the product theorem of
Helfgott \cite{Hel-Sl2}.

\subsection*{Notation}

We write $1$ for the unit element of a multiplicative group, and $\{\pm\Id\}$
for the unit element of $\PSL_2(\F_p)$.

Throughout the paper, we denote by the letters $c,C$ various constants whose value
may change between occurrences.

\subsection*{Organization of the paper}

In Section \ref{sc:girth}, we give estimates for the girth and discuss some slightly
stronger technical results, which will be used in the norm estimates.
In Section \ref{sc:norm} we adapt the method of Bourgain and Gamburd to obtain
the norm estimates.
In (the very short) Section \ref{sc:proof} we combine these results to conclude the
proof of Theorem \ref{th:main}.

\subsection*{Acknowledgments}
The author is grateful to Henry Wilton for asking the question that motivated the
paper.

\section{Girth bounds}\label{sc:girth}

The goal of this section is to prove a technical result, Proposition \ref{pr:girth},
about the existence of subgroups $H_1$, $H_2$
with certain properties in $\PSL_2(\F_p)$.
At the end of the section we will see that if $H_1$ and $H_2$ satisfy the conclusion of this result, then
we can get a good lower bound for $\Girth(\PSL_2(\F_2);H_1,H_2)$.
Some of the properties in this result will be used in the next section, where we estimate the norm of
$A_0(\PSL_2(\F_p);H_1,H_2)$.

Bourgain and Gamburd \cite{BG-prime} gave estimates for the spectral gap of Cayley graphs of $\SL_2(\F_p)$
assuming only a lower bound on the girth.
As we will see in the next section, when we move from $\cG(\PSL_2(\F_p);H_1,H_2)$ to Cayley graphs,
we will need to work with an asymmetric set of generators, that is to say, with a directed Cayley graph.
While this graph has no short directed loop, it does have short undirected loops, and for this reason,
some of the arguments in \cite{BG-prime} do not work verbatim.
This is the reason why we need the full force of Proposition \ref{pr:girth}.

To state the result, we introduce some notation.
We write $F(x_1,\ldots,x_r)$ for the free group of rank $r$ freely generated by $x_1,\ldots, x_r$.
For a group $G$, any element $w\in F(x_1,\ldots,x_r)$ gives rise to a map $w(g_1,\ldots, g_r):G^r\mapsto G$
where we substitute $g_i$ for each generator $x_i$ in $w$ and perform the multiplications in $G$.

\begin{prp}\label{pr:girth}
Let $d\in\Z_{\ge 3}$ be a number and
let
$w\in F(x_1,\ldots, x_{r})$ for some $r\in\Z_{>0}$.
Let $p$ be a sufficiently large (depending on $d$ and $w$)
prime number, which also satisfies $d|(p+1)/2$.

Then there are two subgroups $H_1,H_2<\PSL_2(\F_p)$, such that each is cyclic of order $d$,
$H_1\cap H_2=\{\pm\Id\}$,
and the following holds.

Write $S_0=(H_1\backslash\{\pm\Id\})\cdot (H_2\backslash\{\pm\Id\})$.
Let
\begin{equation}\label{eq:girth}
l\le\frac{\log p}{3 r\log(d-1)}
\footnote{
The constant $3$ in this bound could be improved to $2+\e$ for any $\e>0$.
Moreover, by a simple refinement of the proof, we may achieve that \eqref{eq:no-loop}
has no solution for any $l\le \log p/(2+\e)\log(d-1)$.
See Remark \ref{rm:constant}.
}
\end{equation}
be a positive integer.
Then
\begin{equation}\label{eq:no-loop}
g_1\cdots g_l\not\in H_1
\end{equation}
for any $g_1,\ldots,g_l\in S_0$.
Furthermore, let $\{g_i^{(a)}\}_{i=1,\ldots,l; a=1,\ldots, r} \subset S_0$.
If
\[
w(g_1^{(1)}\cdots g_l^{(1)},\ldots, g_{1}^{(r)}\cdots g_l^{(r)})=\Id,
\]
then there are $1\le a,b\le r$ such that either at least one of $x_a^{-1}x_b$ or $x_b^{-1}x_a$
occurs as a subword of $w$ and
\[
g_{1}^{(a)}=g_{1}^{(b)},\ldots,g_{\lceil l/2\rceil }^{(a)}=g_{\lceil l/2\rceil}^{(b)},
\]
or at least one of $x_ax_b^{-1}$ or $x_bx_a^{-1}$
occurs as a subword of $w$ and
\[
g_{\lfloor l/2\rfloor}^{(a)}=g_{\lfloor l/2\rfloor}^{(b)},\ldots,g_{l}^{(a)}=g_{l}^{(b)}.
\]
\end{prp}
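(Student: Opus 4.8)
The plan is to realize $H_1$ and $H_2$ as conjugates of a single cyclic torus by unipotent elements and to reduce every assertion to the non-vanishing of an explicit polynomial in one conjugation parameter. Since $d\mid(p+1)/2$, the non-split torus of $\PSL_2(\F_p)$, which is cyclic of order $(p+1)/2$, contains a cyclic subgroup $H=\langle\gamma\rangle$ of order $d$. After conjugating $H$ by a fixed element placing it in general position, I may assume that every power $\gamma^k\notin\{\pm\Id\}$ has nonzero lower-left entry (this is forced by anisotropy, since a triangular torus element would have eigenvalues in $\F_p$) and that the open conditions used below hold. I then set $H_1=H$ and $H_2=u(s)Hu(s)^{-1}$ with $u(s)=\mat{1}{s}{0}{1}$ and $s\in\F_p$ a parameter to be chosen. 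Each element of $H_2\setminus\{\pm\Id\}$ has matrix entries of degree at most $2$ in $s$, so a product of $l$ elements of $S_0$ has entries of degree $O(l)$, and substituting such products into $w$ yields entries of degree $O_w(l)$.

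The heart of the argument is a leading-coefficient computation. Writing $e=\mat{0}{1}{0}{0}$, the conjugate $u(s)cu(s)^{-1}$ of a torus element $c$ has leading term $-c_{21}s^2 e$ as $s\to\infty$, and one checks $eae=a_{21}e$. For the first assertion, $g_1\cdots g_l\in H_1$ would force this product to commute with $\gamma$, so it suffices to show that $(g_1\cdots g_l)\gamma-\gamma(g_1\cdots g_l)$ is not the zero polynomial in $s$. Since the factors from $H_1$ are constant in $s$ and those from $H_2$ have the leading behaviour above, the top-degree part of $g_1\cdots g_l$ telescopes, using the nonvanishing of the relevant entries, to a nonzero rank-one matrix of the form $(\text{nonzero scalar})\cdot a_1 e$. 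A rank-one matrix does not commute with the regular semisimple $\gamma$ (for $H$ in general position), so the commutator has nonzero leading coefficient; hence $g_1\cdots g_l\notin H_1$ for all but $O(l)$ values of $s$.

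For the word-map assertion I first carry out a combinatorial analysis of cancellations. Writing $g_i^{(a)}=\alpha_i^{(a)}\beta_i^{(a)}$ with $\alpha_i^{(a)}\in H_1\setminus\{\pm\Id\}$ and $\beta_i^{(a)}\in H_2\setminus\{\pm\Id\}$, the element $w(G^{(1)},\dots,G^{(r)})$ with $G^{(a)}=g_1^{(a)}\cdots g_l^{(a)}$ becomes, after substitution, an alternating product of nontrivial elements of $H_1$ and $H_2$. Because $w$ is reduced and $H_1\cap H_2=\{\pm\Id\}$, a junction coming from $x_ax_b$ admits no cancellation; a junction from $x_a^{-1}x_b$ can cancel only through successive equalities $g_1^{(a)}=g_1^{(b)},g_2^{(a)}=g_2^{(b)},\dots$ (head to head), and one from $x_ax_b^{-1}$ only through $g_l^{(a)}=g_l^{(b)},g_{l-1}^{(a)}=g_{l-1}^{(b)},\dots$ (tail to tail). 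If none of the agreements listed in the statement holds, then at every junction the cancellation stops strictly before the midpoint of the two strands, so each strand keeps a nonempty central block; the surviving reduced word has the same telescoping structure as above, and its top-degree term in $s$ is a nonzero rank-one matrix, in particular different from $\Id$. Thus for configurations violating all the listed agreements, $w(\dots)=\Id$ fails for all but $O_w(l)$ values of $s$.

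Finally I combine these with a union bound over the discrete data. The configurations to exclude are the choices $\{g_i^{(a)}\}\subset S_0$, of which there are at most $(d-1)^{2lr}$, ranging over the $l\le\log p/(3r\log(d-1))$ admissible lengths, together with the finitely many conditions guaranteeing $H_1\cap H_2=\{\pm\Id\}$. For $l$ in the stated range, $(d-1)^{2lr}\le p^{2/3}$, so the total number of excluded values of $s$ is at most $p^{2/3}\cdot O_w(l)=o(p)$, and a good parameter $s$ exists. The factor $3$ in \eqref{eq:girth} is precisely what makes $p^{2/3}$ beat $p$, and replacing it by $2+\e$ corresponds to keeping the exceptional count $p^{1-\delta}\cdot O_w(l)$ below $p$. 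I expect the main obstacle to be the bookkeeping in the combinatorial step: verifying that ``cancellation not reaching the midpoint'' always leaves a genuinely reduced central block whose leading $s$-term survives the telescoping without accidental vanishing, uniformly over all configurations and all subwords of $w$.
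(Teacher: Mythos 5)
Your proposal follows essentially the same route as the paper: the same construction $H_1=H$, $H_2=u(s)Hu(-s)$ with $H$ inside the non-split torus, the same leading-coefficient computation in the conjugation parameter (the paper packages it as the statement that $\Tr(h_1u(x)h_2u(-x)\cdots h_{2k}u(-x))$ is a non-constant polynomial whose top coefficient is $\pm[h_1]_{21}\cdots[h_{2k}]_{21}$, which is exactly your telescoped rank-one term), the same head-to-head/tail-to-tail cancellation analysis via an admissibility condition on the configurations, and the same union bound in which $(d-1)^{2lr}\le p^{2/3}$ beats $p$. The only cosmetic difference is that for the first assertion you detect membership in $H_1$ through non-commutation of the rank-one leading term with $\gamma$, whereas the paper bounds the locus where $\Tr(h_0^{-1}g_1\cdots g_l)=\pm2$ for each of the $d$ candidates $h_0\in H_1$; both work, and the bookkeeping you flag as the remaining obstacle is treated in the paper exactly along the lines you outline.
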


The construction of $H_1$ and $H_2$ is based on the next lemma, whose proof is
similar to arguments in \cite{GHSSV-girth}.
We introduce the notation
\[
u(x)=\pm\mat1x01,
\]
and we write $[a]_{ij}$ for the entry in the $i$'th row and $j$'th column of a matrix $a$.
It is worth remembering that in the case of elements of $\PSL_2(\F_p)$ the matrix entries
are defined up to sign.

\begin{lem}\label{lm:words}
Let $H<\PSL_2(\F_p)$ be a subgroup such that $[h]_{21}\neq 0$
and $\Tr(h)\neq\pm2$
holds for all $h\in H\backslash\{\pm\Id\}$.
Let $f:\F_p\to\PSL_2(\F_p)$ be a map either of the form
\begin{equation}\label{eq:first-form}
f(x)=h_1u(x)h_2u(-x)\cdots h_{2k-1}u(x)h_{2k}u(-x)
\end{equation}
for some $k\in\Z_{>0}$, or of the form
\begin{equation}\label{eq:second-form}
f(x)=h_1u(x)h_2u(-x)\cdots h_{2k-1}u(x)h_{2k}u(-x)h_{2k+1}
\end{equation}
for some $k\in\Z_{\ge 0}$, where
$h_j\in H\backslash\{\pm\Id\}$ are fixed for all $j$.

Then there are at most $4k$ values of $x\in\F_p$ such that
$\Tr(f(x))=\pm2$.
\end{lem}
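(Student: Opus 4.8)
The plan is to lift the whole expression to $\SL_2(\F_p)$ and reduce the lemma to counting roots of a single polynomial in $x$. Fix the representative $\mat{1}{x}{0}{1}$ for $u(x)$ and a fixed $\SL_2$-lift of each $h_j$; then $f(x)$ is represented by a matrix $F(x)\in\SL_2(\F_p)$ whose four entries are polynomials in $x$, and the condition $\Tr(f(x))=\pm2$ in $\PSL_2(\F_p)$ becomes $T(x):=\Tr F(x)\in\{2,-2\}$. Since $F(x)$ is a product with exactly $2k$ unipotent factors $u(\pm x)$, each entry, and hence $T$, has degree at most $2k$ in $x$. Consequently, once we know that $T$ is non-constant, the polynomials $T(x)-2$ and $T(x)+2$ are nonzero of degree at most $2k$ and have no common root (a value cannot equal both $2$ and $-2$), so together they vanish at most $4k$ times; this is precisely the desired bound. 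Thus everything reduces to controlling the degree, and in particular the non-constancy, of $T$.

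The key computation is the coefficient of $x^{2k}$ in $T$. Writing $e=\mat{0}{1}{0}{0}$, the top-degree part of $u(\pm x)$ is $\pm x\,e$, and the elementary identity $e\,h\,e=[h]_{21}\,e$ collapses the product of these top-degree parts. For the first form this yields leading coefficient $(-1)^k\prod_{j=1}^{2k}[h_j]_{21}$, which is nonzero because $[h_j]_{21}\neq0$ for every $h_j\in H\backslash\{\pm\Id\}$; hence $\deg T=2k\ge1$ and we conclude by the previous paragraph. For the second form the same collapse, now ending with the extra factor $h_{2k+1}$ and using $\Tr(h_1\,e\,h_{2k+1})=[h_{2k+1}h_1]_{21}$ (cyclicity of the trace), gives leading coefficient $(-1)^k\big(\prod_{j=2}^{2k}[h_j]_{21}\big)[h_{2k+1}h_1]_{21}$. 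Since $h_{2k+1}h_1\in H$, this is again nonzero — so $\deg T=2k$ — unless $h_{2k+1}h_1=\pm\Id$, i.e. $h_{2k+1}=h_1^{-1}$.

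The one remaining, and genuinely delicate, case is therefore the degenerate second form with $h_{2k+1}=h_1^{-1}$, together with the base case $k=0$; this is where I expect the main work. Here the leading coefficient vanishes and $T$ may even be constant, so the degree count alone fails and the hypothesis $\Tr(h)\neq\pm2$ must enter. When $k=0$ we simply have $f(x)=h_1$ with $\Tr(h_1)\neq\pm2$, giving $0=4k$ solutions. When $k\ge1$ and $h_{2k+1}=h_1^{-1}$, conjugation invariance of the trace removes the outer factors $h_1$ and $h_1^{-1}$, and then the outermost $u(-x)$ can be moved to the front by cyclicity and cancelled against the leading $u(x)$ via $u(-x)u(x)=\Id$; after the substitution $x\mapsto-x$ (a bijection of $\F_p$) what remains is a second-form expression with parameter $k-1$ whose middle matrices still lie in $H\backslash\{\pm\Id\}$. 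Induction on $k$ then bounds the number of solutions by $4(k-1)\le4k$, completing the argument. The two hypotheses play complementary roles: $[h]_{21}\neq0$ forces the generic full degree $2k$, while $\Tr(h)\neq\pm2$ disposes of the fully collapsed configurations that the degree count cannot detect.
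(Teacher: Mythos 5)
Your proof is correct and follows essentially the same route as the paper: bound $\deg\Tr(f)$ by $2k$, show the leading coefficient for the first form is $\pm\prod_j[h_j]_{21}\neq0$, and handle the second form by induction, reducing the degenerate case $h_{2k+1}=h_1^{-1}$ via conjugation and cyclic cancellation of $u(-x)u(x)$ exactly as the paper does. The only (cosmetic) difference is that in the non-degenerate second-form case you compute the leading coefficient $[h_{2k+1}h_1]_{21}$ directly, whereas the paper conjugates by $h_{2k+3}$ to reduce to the first form --- the same fact in different clothing.
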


\begin{proof}
It is clear that $\Tr(f(x))$ is a polynomial of degree at most $2k$, so the
claim will follow if we show that $\Tr(f(x))\not\equiv\pm2$.

We first prove this in the case when $f$ is of the form
\eqref{eq:first-form}.
Multiplying out the product of matrices in \eqref{eq:first-form}, we look for the
coefficient of $x^{2k}$.
In order to get a contribution to this coefficient, we need to choose the entry
$[\cdot]_{12}$ for each factor of the form $u(x)$ or $u(-x)$.
Then the rules of matrix multiplication force us to choose the entry $[\cdot]_{21}$
from each $h_j$ for $j>1$.
Therefore, we see that the coefficient of $x^{2k}$ is $0$ in $[f(x)]_{11}$ and it is
\[
\pm[h_1]_{21}\cdots[h_{2k}]_{21}.
\]
in $[f(x)]_{22}$.
Since $[h_{j}]_{21}$ is never $0$ by the assumptions of the lemma, we found that
$\Tr(f(x))$ is not constant, and this proves the claim.

Now we consider the case when $f$ is of the form
\eqref{eq:second-form} and prove that $\Tr(f(x))\not\equiv\pm2$.
We handle this case by induction on $k$.
If $k=0$, then $\Tr(f(x))=\Tr(h_1)$ is constant but it is not equal to $\pm2$ by the assumptions of the lemma.

Now we suppose that the claim holds for some value of $k$ and prove it for $k+1$.
If $h_1\neq h_{2k+3}^{-1}$, then we write
\[
g(x)=h_{2k+3}h_1u(x)h_2u(-x)\cdots h_{2k-1}u(x)h_{2k+2}u(-x)=h_{2k+3}f(x)h_{2k+3}^{-1}.
\]
We note that $\Tr(f(x))=\Tr(g(x))$ and $g(x)$ is of the form \eqref{eq:first-form}, so the claim follows.
If $h_1= h_{2k+3}^{-1}$, the we write
\[
g(x)=h_2u(-x)\cdots h_{2k-1}u(x)h_{2k+2}=u(-x)h_{2k+3}f(x)h_{2k+3}^{-1}u(x).
\]
We note that $\Tr(f(x))=\Tr(g(x))$ and $g(-x)$ is of the form \eqref{eq:second-form} with the value of $k$
reduced, so the claim follows again.
\end{proof}

\begin{proof}[Proof of Proposition \ref{pr:girth}]
We first recall that $\PSL_2(\F_p)$ has a cyclic subgroup of order $(p+1)/2$ that consists of elements mutually diagonalizable
over $\F_{p^2}$ and the only element in this subgroup that is diagonalizable over $\F_p$ is $\pm\Id$.
Since $d|p+1$, there is a cyclic subgroup $H<\SL_2(\F_p)$ of order $d$ such that $h_{21}\neq 0$ and $\Tr(h)\neq\pm2$
for all $h\in H\backslash\{\pm\Id\}$.
Indeed, if $h_{21}=0$ or $\Tr(h)=\pm2$ for some $h$, then $h$ has eigenvalues in $\F_p$, so it is either diagonalizable in $\F_p$
or it is not diagonalizable in any field extension.

We set $H_1=H$ and $H_2=u(x)Hu(-x)$ and we show that the proposition holds with a suitable
choice of $x\in\F_p$.
By Lemma \ref{lm:words}, for each pairs of elements $h_1,h_2\in H\backslash\{\pm\Id\}$, there are at most $4$ choices of $x$
such that
\[
h_1=u(x)h_2u(-x).
\]
So there are at most $4d^2$ choices of $x\in\F_p$ such that $H_1\cap H_2\neq\{\pm\Id\}$.

Writing
\[
L=\frac{\log p}{3r\log(d-1)},
\]
we let $l\le L$ and let $S_0$ be as in the proposition.
Suppose that
\begin{equation}\label{eq:loop}
g_1\cdots g_l=h_0
\end{equation}
for some $g_1,\ldots, g_l\in S_0$ and $h_0\in H$.
In this case, for each $i$, there are $h_{2i-1},h_{2i}\in H\backslash\{\pm\Id\}$ such that
$g_i=h_{2i-1}u(x)h_{2i}u(-x)$.
It follows from Lemma \ref{lm:words} that for any each choice of $h_0,\ldots,h_{2l}$, there are at most
$4l$ choices of $x$ such that \eqref{eq:loop} holds.
We have at most $L$ choices for $l$, so
for all but $4L^2d(d-1)^{2L}$ choices of $x$, condition \eqref{eq:no-loop} holds.

It remains to verify the last condition and it has a similar proof.
Let $w$ be as in the proposition and let $l\le L$.
For any choice of $\{g_i^{(a)}\}_{i=1,\ldots,l; a=1,\ldots, r}$,
there are elements $h_{2i-1}^{(a)},h_{2i}^{(a)}\in H\backslash\{\pm\Id\}$
such that $g_i^{(a)}=h_{2i-1}^{(a)}u(x)h_{2i}^{(a)}u(-x)$
for each $i$ and $a$ in the relevant ranges.

We say that $\{h_i^{(a)}\}_{i=1,\ldots, 2l;a=1,\ldots,r}$ is admissible if the following holds.
If $1\le a,b\le r$ are such that $x_a^{-1}x_b$ or $x_b^{-1}x_a$ occur in $w$ then
there is $i\le2\lceil l/2\rceil$ such that $h_{i}^{(a)}\neq h_i^{(b)}$.
If $1\le a,b\le r$ are such that $x_ax_b^{-1}$ or $x_bx_a^{-1}$ occur in $w$ then
there is $i>2\lfloor l/2\rfloor$ such that $h_{i}^{(a)}\neq h_i^{(b)}$.

Now it remains to show that 
there are more than $4d^2+4L^2d(d-1)^{2L}$ choices of $x\in \F_p$ such that
\begin{align}\label{eq:word}
w(h_1^{(1)}u(x)&h_2^{(1)}u(-x)\cdots h_{2l-1}^{(1)}u(x)h_{2l}^{(1)}u(-x),\ldots,\nonumber\\
&h_1^{(r)}u(x)h_2^{(r)}u(-x)\cdots h_{2l-1}^{(r)}u(x)h_{2l}^{(r)}u(-x))= \pm\Id
\end{align}
does not hold
for any admissible choices of $\{h_i^{(a)}\}$.

For each admissible choice of $\{h_i^{(a)}\}$,
we can apply Lemma \ref{lm:words} and get that the number of solutions of \eqref{eq:word}
in $\F_p$ is at most $4l|w|$.
The number of choices for $l$ is at most $L$,
and the number of choices for $\{h_i^{(a)}\}$ is at most $(d-1)^{2Lr}$.
Therefore, it remains to verify that
\[
4d^2+4L^2d(d-1)^{2L}+4|w|L^2(d-1)^{2L r}<p,
\]
which certainly holds if $p$ is large enough.
\end{proof}

\begin{rmk}\label{rm:constant}
We could modify the argument as follows.
Instead of requiring that \eqref{eq:word} and \eqref{eq:loop} have no solution for $l\le L$ for the same value of $L$,
we could set
\[
L_1=\frac{\log p}{(2+\e)\log(d-1)}, \quad\text{and}\quad
L_2=\frac{\log p}{(2+\e)r\log(d-1)}
\]
and seek $x$ such that \eqref{eq:loop} has no solution for $l\le L_1$ and \eqref{eq:word} has no solution for
$l\le L_2$.
The above argument than gives that such an $x$ can be found if
\[
4d^2+4L_1^2d(d-1)^{2L_1}+4|w|L_2^2(d-1)^{2L_2r}<p,
\]
which, of course, holds with the above choices of $L_1$ and $L_2$ for any $\e>0$,
provided $p$ is sufficiently large.
This justifies the claim we made after Proposition \ref{pr:girth}.
\end{rmk}

\begin{lem}\label{lm:girth}
Let $G$ be a finite group and let $H_1,H_2<G$ be non-trivial finite subgroups
such that $H_1\cap H_2=\{1\}$.
Write $S_0=(H_1\backslash\{1\})\cdot (H_2\backslash\{1\})$.
Suppose that
\[
g_1\cdots g_l\not\in H_1
\]
for any $l\le L$ and for any $g_1,\ldots,g_l\in S_0$.
Then
\[
\Girth(G;H_1,H_2)>2L.
\]
\end{lem}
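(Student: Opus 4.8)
The plan is to exploit the fact that, because $H_1\cap H_2=\{1\}$, the edges of $\cG(G;H_1,H_2)$ are in a canonical bijection with the elements of $G$. Indeed, if $g_1H_1$ and $g_2H_2$ are adjacent, then $g_1H_1\cap g_2H_2$ is a nonempty coset of $H_1\cap H_2=\{1\}$, hence a single element $g\in G$, and that element satisfies $gH_1=g_1H_1$ and $gH_2=g_2H_2$; conversely every $g\in G$ determines the edge $gH_1\sim gH_2$. I would record this correspondence first, since it lets me translate a combinatorial loop into a product relation in $G$.

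Next I would take a shortest non-trivial (hence reduced, i.e.\ non-backtracking) closed walk in the graph. Since the graph is bipartite it has even length $2m$; write its $G/H_1$-vertices as $u_0,\dots,u_{m-1}$ and its intervening $G/H_2$-vertices as $w_0,\dots,w_{m-1}$, so the walk reads $u_0,w_0,u_1,w_1,\dots,u_{m-1},w_{m-1},u_0$. Using the correspondence above, let $p_i\in G$ represent the edge $u_i\sim w_i$ and $q_i\in G$ represent the edge $w_i\sim u_{i+1}$ (indices mod $m$). Then $p_i$ and $q_i$ lie in the same coset $w_i$, so $s_i:=p_i^{-1}q_i\in H_2$, while $q_i$ and $p_{i+1}$ lie in the same coset $u_{i+1}$, so $t_i:=q_i^{-1}p_{i+1}\in H_1$. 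The point I would check carefully is that the walk being reduced forces $s_i\neq 1$ and $t_i\neq 1$: an equality $s_i=1$ (resp.\ $t_i=1$) means the two edges meeting at $w_i$ (resp.\ at $u_{i+1}$) coincide as elements of $G$, which is exactly backtracking.

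From $p_{i+1}=p_is_it_i$ and $p_m=p_0$ I would telescope around the loop to obtain
\[
s_0t_0\,s_1t_1\cdots s_{m-1}t_{m-1}=1 .
\]
Conjugating by $s_0$ (equivalently, rotating the cyclic word) turns this into
\[
(t_0s_1)(t_1s_2)\cdots(t_{m-2}s_{m-1})(t_{m-1}s_0)=1,
\]
and here lies the crux: each factor $t_is_{i+1}$ (with $s_m:=s_0$) lies in $(H_1\backslash\{1\})\cdot(H_2\backslash\{1\})=S_0$, precisely because all the $s_i$ and $t_i$ are nontrivial. Thus we have exhibited a product of exactly $m$ elements of $S_0$ equal to $1\in H_1$. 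By hypothesis no product of at most $L$ elements of $S_0$ lies in $H_1$, so necessarily $m>L$, whence $\Girth(G;H_1,H_2)=2m>2L$.

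I expect the only genuinely delicate point to be the bookkeeping around the non-backtracking condition, namely verifying that ``reduced loop'' translates exactly into ``all $s_i,t_i\neq 1$'', together with the regrouping step, where the cyclic rotation is what converts a product of alternating $H_2$-then-$H_1$ factors into a product of $S_0=(H_1\backslash\{1\})(H_2\backslash\{1\})$ factors so that the hypothesis applies. Everything else is routine coset algebra.
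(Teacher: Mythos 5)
Your proof is correct and follows essentially the same route as the paper's: both convert a shortest loop into an alternating product of nontrivial elements of $H_1$ and $H_2$ and regroup it into a product of elements of $S_0$ lying in $H_1$. The only cosmetic differences are that you parametrize edges by group elements (using $H_1\cap H_2=\{1\}$) where the paper chooses coset representatives for vertices, and you close up the word by a cyclic rotation where the paper instead replaces a possibly trivial leading $H_1$-factor by a nontrivial one.
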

\begin{proof}
Let $\Girth(G;H_1,H_2)=2l$.
Consider a loop of length $2l$ in $\cG(G;H_1,H_2)$ and let
\[
\g_0 H_1,\g_1 H_2,\ldots,\g_{2l-1}H_2,\g_{2l} H_1=\g_0 H_1
\]
be the sequence of vertices in that loop. 

Let $i\in[1,2l]$ be an odd number.
Then $\g_{i}H_2\cap\g_{i-1}H_1\neq\varnothing$ implies that $\g_{i-1}^{-1}\g_{i}\in H_1H_2$
so $\g_{i}\in \g_{i-1}H_1H_2$.
If $i\in[1,2l]$ is even, a similar argument gives $\g_{i}\in \g_{i-1}H_2H_1$.
Replacing $\g_1,\ldots, \g_{2l}$ with other representatives of their respective cosets
if necessary, we can ensure that 
for each $i=1,\ldots,2l$, there is $h_i$
such that $\g_{i}=\g_{i-1}h_{i}$, where $h_{i}\in H_1$ if $i$ is odd, and $h_{i}\in H_2$ if $i$ is even.

In order to apply the hypothesis in the lemma, we need to consider whether $h_i=\pm\Id$
is possible for some $i$.
Suppose that $h_i=\pm\Id$ for some even number $i$.
Then
\[
\g_iH_1=\g_{i-2}h_{i-1}H_1=\g_{i-2}H_1,
\]
so the loop has repeated vertices, which is not possible.
This shows that $h_{2i}\neq\pm\Id$ for all $i$.
The same argument is valid for odd numbers $i\ge 3$.

We can conclude therefore that there are $g_2,\ldots,g_l\in S_0$,
$h_1\in H_1$ and $h_2\in H_2\backslash\{\pm\Id\}$ such that
\[
\g_{2l}=\g_0h_1h_2g_2\cdots g_l.
\]
Since $\g_0 H_1=\g_{2l} H_1$,
we conclude that
\[
h_1h_2g_2\cdots g_l\in H_1.
\] 
Replacing $h_1$ with any element of $H_1\backslash\{1\}$ if necessary, we can
find elements $g_1,\ldots, g_l\in S_0$ such that 
\[
g_1g_2\cdots g_l\in H_1.
\]
Using the hypothesis of the lemma, we can conclude from this that $l>L$, which
is precisely what we wanted to prove.
\end{proof}

\section{Norm bounds}\label{sc:norm}

The purpose of this section is to give an estimate for the norm of $A_0(\SL_2(\F_p);H_1,H_2)$
under the assumption that $H_1$ and $H_2$ satisfy the conclusions of Proposition \ref{pr:girth}.
First, we reduce the problem to bounding the norm of averaging operators on (directed)
Cayley graphs.
Then we use the method of Bourgain and Gamburd to give such bounds.
The only difference with \cite{BG-prime} is that we are forced by our setting to work with
an asymmetric  set generators.
This affects only one part of the argument, the estimates for the probability that the random
walk hits a proper subgroup coset.
We will deal with in Lemma \ref{lm:non-concentration} below using the information given to us
by Proposition \ref{pr:girth}.

We introduce some notation.
Let $G$ be a finite group and let $S\subset G$.
We write $A(G;S)$ for the operator acting on $l^2(G)$ by
\[
A(G;S)f(g)=\frac{1}{|S|}\sum_{s\in S}{f(sg)}.
\]
We write $A_0(G;S)$ for the restriction of $A(G;S)$ to $l^2_0(G)$
the orthogonal complement of constant functions in $l^2(G)$.

\begin{lem}\label{lm:coset2Cayley}
Let $G$ be a finite group and let $H_1,H_2<G$ be finite subgroups of the same order
such that $H_1\cap H_2=\{1\}$.
Let $S=H_1H_2$.
Then
\[
\|A_0(G;H_1,H_2)\|^2\le \|A_0(G;S)\|.
\]
\end{lem}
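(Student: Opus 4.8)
The plan is to exploit the bipartite structure of $\cG(G;H_1,H_2)$ and to realize the Cayley operator $A(G;S)$ as a two-step walk operator on one side of the graph.

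First I would record that $\cG(G;H_1,H_2)$ is regular of degree $m:=|H_1|=|H_2|$. The neighbours of a coset $gH_1$ are exactly the cosets $gh_1H_2$ with $h_1\in H_1$, and these are pairwise distinct because $gh_1H_2=gh_1'H_2$ forces $h_1^{-1}h_1'\in H_1\cap H_2=\{1\}$; the same holds on the $G/H_2$ side. Hence $A(G;H_1,H_2)$ is the normalized adjacency operator of a regular graph and, being bipartite, has the block form $\left(\begin{smallmatrix}0&B\\ B^*&0\end{smallmatrix}\right)$ with $B\colon\ell^2(G/H_2)\to\ell^2(G/H_1)$ given by $Bf(gH_1)=\frac1m\sum_{h_1\in H_1}f(gh_1H_2)$. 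Regularity gives $B\mathbf{1}=\mathbf{1}$, so $B$ preserves the zero-average subspaces; writing $B_0$ for its restriction, self-adjointness of $A_0$ yields $\|A_0(G;H_1,H_2)\|^2=\|B_0\|^2=\|B_0^*B_0\|$.

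The heart of the argument is to identify $B^*B$ with a Cayley averaging operator. I would lift functions on $G/H_i$ to $\ell^2(G)$ via the isometry $U_i$ onto the space of right-$H_i$-invariant functions, i.e.\ onto the range of the orthogonal projection $Q_if(g)=\frac1m\sum_{h\in H_i}f(gh)$. A direct computation gives $U_1B=Q_1U_2$, hence $B=U_1^*Q_1U_2$ and, using $U_1U_1^*=Q_1$ together with $U_2^*Q_2=U_2^*$, one obtains $B^*B=U_2^*Q_1U_2=U_2^*Q_2Q_1U_2$. Now $Q_2Q_1$ is precisely the operator of averaging over right multiplication by $S^{-1}=H_2H_1$, a directed Cayley operator; call it $\tilde A(G;S^{-1})$. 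Thus $B^*B$ is the compression of $\tilde A(G;S^{-1})$ to the right-$H_2$-invariant functions. The inversion $g\mapsto g^{-1}$ and the passage between left and right multiplication are unitary conjugations preserving $\ell^2_0(G)$, so they do not change the relevant operator norm; in particular $\|\tilde A_0(G;S^{-1})\|=\|A_0(G;S)\|$, which I would verify separately.

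Finally I would assemble the bound. For $f\in\ell^2_0(G/H_2)$ put $\phi=U_2f$, which lies in $\ell^2_0(G)$ precisely because $f$ has zero average. Then $\langle B^*Bf,f\rangle=\langle \tilde A(G;S^{-1})\phi,\phi\rangle$, and Cauchy--Schwarz gives $\langle \tilde A(G;S^{-1})\phi,\phi\rangle\le\|\tilde A(G;S^{-1})\phi\|\,\|\phi\|\le\|\tilde A_0(G;S^{-1})\|\,\|\phi\|^2=\|A_0(G;S)\|\,\|f\|^2$. Taking the supremum over $f$ yields $\|B_0^*B_0\|\le\|A_0(G;S)\|$, which is the claim. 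I expect the main obstacle to be bookkeeping rather than anything deep: keeping the left/right coset and left/right multiplication conventions consistent so that the two-step operator $B^*B$ genuinely matches $A(G;S)$ up to the norm-preserving symmetries, and tracking the constant function and the two separate zero-average conditions correctly through the lifting. The Cauchy--Schwarz step is exactly where the square in the statement appears, so no sharper spectral input is needed.
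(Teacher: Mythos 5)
Your proof is correct and rests on the same key identity as the paper's: the two‑step walk operator on one side of the bipartite graph, lifted to $\ell^2(G)$ via the natural isometry onto $H_i$‑invariant functions, is the averaging operator over $S$ (you get $S^{-1}=H_2H_1$ by working on the $G/H_2$ side, but as you note this is a norm‑preserving unitary conjugation away), with the uniform weights coming from $H_1\cap H_2=\{1\}$. The paper packages the final step as an eigenfunction‑lifting argument for $A_0(G;H_1,H_2)^2$ rather than your block decomposition plus compression and Cauchy--Schwarz, but this is only a cosmetic difference.
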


\begin{proof}
Since $A_0(G;H_1,H_2)^2$ is self-adjoint and positive definite, its norm is equal to $\l$, where $\l$ is one of its eigenvalues.
Let $f$ be an eigenfunction of $A_0(G;H_1,H_2)^2$ corresponding to $\l$.
We write $\overline f$ for the restriction of $f$ to $G/H_1$.
Note that $\overline f$ is an eigenfunction of the restriction of $A_0(G;H_1,H_2)^2$ to $G/H_1$
with eigenvalue $\l$.

Consider the function $\wt f: G\to \C$ defined by $\wt f(g)=\overline f(gH_1)$.
We claim that $\wt f$ is an eigenfunction of $A_0(G;S)^T$ with eigenvalue $\l$
and this proves the lemma.

To that end, we observe that by definition $A_0(G;H_1,H_2)^2 \overline f(g_1H_1)$ is the average value
of $\overline f(g_3H_1)$ where $g_3H_1$ is such that there is $g_2\in G$ with
\begin{equation}\label{eq:coset2Cayley}
g_1H_1\cap g_2H_2\neq\varnothing,\quad\text{and}\quad g_2H_2\cap g_3H_1\neq\varnothing.
\end{equation}
The first condition in \eqref{eq:coset2Cayley} holds precisely if
$g_1^{-1}g_2\in H_1H_2$, that is $g_2\in g_1 H_1H_2$.
Similarly, the second condition in \eqref{eq:coset2Cayley} holds precisely if
$g_3\in g_2 H_2H_1$.
Therefore, the two condition in \eqref{eq:coset2Cayley} hold together for some $g_2\in G$ precisely if
$g_3\in g_1 H_1H_2H_1$.
Using that the elements $h_1h_2$ are distinct for $h_1$ running through $H_1$ and $h_2$ running through
$H_2$, (which follows from $H_1\cap H_2=\{1\}$), we conclude that
\[
\l f(gH_1)=A_0(G;H_1,H_2)^2 \overline f(gH_1)=\frac{1}{|S|}\sum_{s\in S}\overline f(gsH_1).
\] 

We observe that
\[
A_0(G;S)^T \wt f(g)=\frac{1}{|S|}\sum_{s\in S} \wt f(gs)=\frac{1}{|S|}\sum_{s\in S} \overline f(gsH_1)
=\l \overline f(g H_1)=\l \wt f(g),
\]
which proves our claim.
\end{proof}

The next result gives the norm estimate for $A_0(\PSL_2(\F_p);S_0)$
it will be applied for the generating set $S_0=(H_1\backslash \{\pm\Id\})(H_2\backslash \{\pm\Id\})$
that we obtain from Proposition \ref{pr:girth}.
The estimate for the norm of $A_0(\PSL_2(\F_p);S)$, where $S=H_1H_2$, can be easily deduced from this
using convexity of the norm.

\begin{prp}\label{pr:SG-Cayley}
For every $\a\in(0,1]$, there is a number $c>0$ such that the following holds.
Let $S_0\subset\PSL_2(\F_p)$ be a finite set with $|S_0|\ge 2$.
Let
\[
w=[[x_1x_2^{-1},x_3x_4^{-1}],[x_5x_6^{-1},x_7x_8^{-1}]]\in F(x_1,\ldots,x_8).
\]
Assume that for all $l\in\Z_{\ge 0}$ with
\[
l\le\a\frac{\log p}{\log |S_0|}
\]
and for all $\{g_i^{(a)}\}_{i=1,\ldots,l;a=1,\ldots,8}\subset S_0$
we have that
\[
w(g_1^{(1)}\cdots g_l^{(1)},\ldots, g_{1}^{(8)}\cdots g_l^{(8)})=\pm\Id,
\]
implies that there are $1\le a\neq b\le 8$ such that
\[
g_{1}^{(a)}=g_{1}^{(b)},\ldots,g_{\lceil l/2\rceil }^{(a)}=g_{\lceil l/2\rceil}^{(b)},
\]
or
\[
g_{\lfloor l/2\rfloor}^{(a)}=g_{\lfloor l/2\rfloor}^{(b)},\ldots,g_{l}^{(a)}=g_{l}^{(b)}.
\]

Then
\[
\|A_0(\PSL_2(\F_p);S_0)\|\le|S_0|^{-c}
\]
provided $p$ is sufficiently large depending on $|S_0|$ and $\a$.
\end{prp}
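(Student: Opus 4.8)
The plan is to run the Bourgain--Gamburd machine \cite{BG-prime}, with Helfgott's product theorem \cite{Hel-Sl2} as the source of growth, for the \emph{directed} Cayley graph of $G=\PSL_2(\F_p)$ attached to $S_0$. Write $n=|G|\asymp p^3$, let $\mu$ be the uniform probability measure on $S_0$, so that $A(G;S_0)$ is convolution by $\mu$ and $\l:=\|A_0(G;S_0)\|$ is the quantity to be bounded, and set $\nu=\tilde\mu*\mu$, a symmetric probability measure. The argument rests on three inputs: quasirandomness, an $\ell^2$-flattening estimate, and a non-concentration estimate; the last is where the hypothesis enters and is the main obstacle.

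First I would reduce the norm bound to equidistribution of the symmetric walk. The operator $A_0(G;S_0)^*A_0(G;S_0)$ is convolution by $\nu$, so its top eigenvalue $\l^2$ is an eigenvalue of $\hat\nu(\rho)$ for some nontrivial irreducible representation $\rho$, and hence occurs with multiplicity at least $\dim\rho\ge(p-1)/2\gg n^{1/3}$ by the classical bound of Frobenius (quasirandomness). Comparing the single term $D\l^{4s}$, with $D=(p-1)/2$, against the trace $\Tr\big((A_0^*A_0)^{2s}\big)=n\|\nu^{(s)}\|_2^2-1$ yields $\l^{4s}\le (n\|\nu^{(s)}\|_2^2-1)/D$. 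Thus it suffices to flatten the walk to $\|\nu^{(s)}\|_2^2\le n^{-1+\e}$ for a fixed small $\e<1/3$ at some length $s\asymp\log p/\log|S_0|$: then $\l^{4s}\le n^{\e-1/3}<1$ for $p$ large, and taking $4s$-th roots gives $\l\le|S_0|^{-c}$ with $c=c(\a)>0$.

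Next comes the $\ell^2$-flattening. As long as $\|\nu^{(s)}\|_2^2$ exceeds $n^{-1+\e}$, the Balog--Szemer\'edi--Gowers lemma combined with Helfgott's product theorem \cite{Hel-Sl2} forces a dichotomy: either $\|\nu^{(2s)}\|_2$ is polynomially smaller than $\|\nu^{(s)}\|_2$, or $\nu^{(s)}$ concentrates, $\nu^{(s)}(xH)\ge n^{-\e}$, on a coset of a proper subgroup $H<G$. Iterating the first alternative pushes the $\ell^2$-mass down to the required level in $O(\log p)$ steps, so the whole task is to exclude the second alternative. Proper subgroups of $\PSL_2(\F_p)$ are either bounded ($A_4,S_4,A_5$), in which case any spreading of the walk already rules out concentration, or contained in a Borel subgroup or the normalizer of a torus; the latter two are metabelian.

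Finally, the non-concentration estimate on metabelian cosets is exactly what the hypothesis supplies. The word $w=[[x_1x_2^{-1},x_3x_4^{-1}],[x_5x_6^{-1},x_7x_8^{-1}]]$ is a law for metabelian groups: if eight walk endpoints $g^{(a)}=g_1^{(a)}\cdots g_l^{(a)}$ all lie in a single coset $xH$ with $H$ metabelian, then each ratio $g^{(2i-1)}(g^{(2i)})^{-1}$ lies in the metabelian group $xHx^{-1}$, so the value of $w$ at these endpoints is $\pm\Id$. The hypothesis then forces a coincidence between the first or second halves of two of the eight walks, and counting the tuples with such a coincidence shows their number is smaller by a factor $|S_0|^{-\lceil l/2\rceil}$; this translates into $\max_x\nu^{(l)}(xH)\le n^{-\eta}$ for every metabelian $H$, with $\eta=\Omega(\a)$, at the scale $l\asymp\a\log p/\log|S_0|$. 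Since concentration on cosets of a fixed subgroup can only decrease along the walk, the bound persists at all larger lengths, which are precisely those at which the flattening operates. The real difficulty, and the reason the full strength of Proposition \ref{pr:girth} is needed, is that $S_0$ is asymmetric: the directed Cayley graph has short undirected loops, so the verbatim girth argument of \cite{BG-prime} breaks down, and one must control exactly which coincidences $g_i^{(a)}=g_i^{(b)}$ the relation $w=\pm\Id$ can force, which is the content of the combinatorial conclusion of Proposition \ref{pr:girth}.
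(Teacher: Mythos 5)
Your overall architecture is the same as the paper's: quasirandomness via the Frobenius lower bound on dimensions of nontrivial representations, $L^2$-flattening via Balog--Szemer\'edi--Gowers plus Helfgott, and non-concentration obtained by feeding eight independent walks into the metabelian law $w$ and invoking the coincidence hypothesis. That is exactly how the paper proceeds, and your identification of the asymmetry of $S_0$ as the crux is also correct. There is, however, one genuine gap in how you assemble the pieces. You symmetrize multiplicatively, setting $\nu=\wt\mu*\mu$, and you run the flattening iteration on $\nu$; for that you need the non-concentration input $\max_x\nu^{*l}(xH)\le n^{-\eta}$. But the counting argument you describe (and the hypothesis of the proposition) only controls the \emph{forward} walk: it concerns products $g_1^{(a)}\cdots g_l^{(a)}$ with every $g_i^{(a)}\in S_0$, i.e.\ the measure $\mu^{*l}$. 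The $\nu$-walk produces alternating words $s_1^{-1}t_1\cdots s_l^{-1}t_l$ with $s_i,t_i\in S_0$, and if eight such endpoints lie in $xH$ the resulting relation is a word in $S_0\cup S_0^{-1}$ that the hypothesis says nothing about. The sentence ``this translates into $\max_x\nu^{(l)}(xH)\le n^{-\eta}$'' is therefore unjustified, and it cannot be rescued by soft inequalities: flattening or non-concentration for $\mu^{*l}$ does not imply the corresponding statement for $(\wt\mu*\mu)^{*l}$ (the relevant trace inequalities go the wrong way). This is precisely the point where the asymmetry you flagged actually bites.

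The paper avoids this by never forming $\wt\mu*\mu$. It proves non-concentration for the asymmetric measure $\chi_{S_0}^{*l}$ itself (your coincidence count, verbatim), applies the $L^2$-flattening lemma directly to this asymmetric measure --- noting that the symmetry hypothesis in the flattening lemma can be dispensed with by passing to the \emph{additive} symmetrization $g\mapsto(\mu(g)+\mu(g^{-1}))/2$, whose coset masses are still controlled because the family of proper subgroups is closed under conjugation and inversion --- and then runs the Sarnak--Xue trace comparison on $(A_0^{s})^TA_0^{s}$, bounding its trace by $|G|\,\|\chi_{S_0}^{*s}\|_2^2$. If you replace your $\nu$-based bookkeeping by this (i.e.\ compare $D\l^{2s}$ against $n\|\mu^{*s}\|_2^2$ rather than $D\l^{4s}$ against $n\|\nu^{*s}\|_2^2$), the rest of your argument goes through and coincides with the paper's proof.
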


The next result of Bourgain and Gamburd that we quote from \cite{BG-prime} does all the heavy lifting in the
proof of the norm estimate.
Its proof also encompasses Helfgott's product theorem \cite{Hel-Sl2}.

If $\mu$, $\nu$ are  functions (or measures) on a finite group $G$ we write $\mu*\nu$ for their
convolution, that is
\[
\mu*\nu(g)=\sum_{g_1,g_2:g=g_1g_2}\mu(g_1)\nu(g_2).
\]
We write $\mu^{*l}=\mu*\ldots*\mu$ for the $l$-fold self-convolution of $\mu$.

\begin{prp}[$L^2$-flattening]\label{pr:flattening}
For every $\e>0$, there are $C,\d>0$ such that the following holds.
Let $\mu$ be a probability measure on $\PSL_2(\F_p)$.
Assume that $\mu(gH)<p^{-\e}$ for all $g\in \PSL_2(\F_p)$
and for all proper subgroups $H<\PSL_2(\F_p)$.
Then
\[
\|\mu*\mu\|_2\le C\max(p^{-3/2+\e},p^{-\d}\|\mu\|_2).
\]
\end{prp}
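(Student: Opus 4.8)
The plan is to establish the contrapositive by the route of \cite{BG-prime}: combine the non-commutative Balog--Szemer\'edi--Gowers theorem with Helfgott's product theorem \cite{Hel-Sl2}. Write $N=\|\mu\|_2^{-2}$ and note that the uniform measure on $\PSL_2(\F_p)$ has $\ell^2$-norm of order $p^{-3/2}$, since $|\PSL_2(\F_p)|$ is of order $p^3$; thus the target inequality asserts that after one convolution either $\mu*\mu$ is already essentially equidistributed, or its $\ell^2$-norm has dropped by a fixed power of $p$. I argue by contradiction: fix a small $\d>0$ to be chosen in terms of $\e$, and assume
\[
\|\mu*\mu\|_2>p^{-\d}\|\mu\|_2 \quad\text{and}\quad \|\mu*\mu\|_2>Cp^{-3/2+\e}.
\]
Since Young's inequality gives $\|\mu*\mu\|_2\le\|\mu\|_2$, the second assumption forces $\|\mu\|_2>Cp^{-3/2+\e}$, hence $N<p^{3-2\e}$ up to constants, which will later keep us inside Helfgott's size window.

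The first assumption is a largeness statement for the weighted multiplicative energy, since
\[
\|\mu*\mu\|_2^2=\sum_{xy=x'y'}\mu(x)\mu(y)\mu(x')\mu(y'),
\]
and this is within a factor $p^{-2\d}$ of its maximal possible value $\|\mu\|_2^2$. Feeding this into the non-commutative Balog--Szemer\'edi--Gowers theorem (in the measure-theoretic form used by Bourgain and Gamburd, after a dyadic regularisation of the level sets of $\mu$ that costs only powers of $\log p$) produces a symmetric approximate subgroup $H=H^{-1}$ containing the identity, with polynomial-in-$p^{\d}$ control
\[
|H|\le p^{C\d}N,\qquad |H\cdot H\cdot H|\le p^{C\d}|H|,
\]
together with an element $g_0$ and a bounded integer $k$ for which $\mu^{*k}(g_0H)\ge p^{-C\d}$. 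I also record that the non-concentration hypothesis is inherited by every convolution power: expanding $\mu^{*k}(gH')=\sum_x\mu(x)\,\mu^{*(k-1)}(x^{-1}gH')$ and inducting on $k$ shows $\mu^{*k}(gH')<p^{-\e}$ for every proper subgroup $H'$ and every $g$.

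With $H$ in hand I run Helfgott's dichotomy, after checking the size window. The bound $|H|\le p^{C\d}N<p^{3-2\e+C\d}$ keeps $|H|<p^{3-\k}$ once $\k<2\e$ and $\d$ is small, so the upper end of the window is automatic. Suppose first that $|H|\ge p^{\k}$. Helfgott's product theorem \cite{Hel-Sl2} then gives $\tau=\tau(\k)>0$ such that $H$ is either contained in a proper subgroup $H'<\PSL_2(\F_p)$ or satisfies $|H\cdot H\cdot H|\ge|H|^{1+\tau}$. In the first case $\mu^{*k}(g_0H')\ge\mu^{*k}(g_0H)\ge p^{-C\d}>p^{-\e}$ once $\d$ is small compared with $\e$, contradicting the inherited non-concentration. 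In the second case $|H|^{1+\tau}\le|H\cdot H\cdot H|\le p^{C\d}|H|$ forces $p^{\k\tau}\le|H|^{\tau}\le p^{C\d}$, which is false once $\d<\k\tau/C$. Either way we reach a contradiction.

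It remains to treat $|H|<p^{\k}$, and this is where the trivial subgroup enters. Since $g_0H$ then has fewer than $p^{\k}$ elements and $\mu^{*k}(g_0H)\ge p^{-C\d}$, some single element $y$ satisfies $\mu^{*k}(y)\ge p^{-C\d-\k}$; applying the inherited non-concentration to the (proper) trivial subgroup $\{\pm\Id\}$ gives $\mu^{*k}(y)<p^{-\e}$, again a contradiction once $\k+C\d<\e$. Thus all cases are impossible, and the contrapositive---hence the Proposition---follows after fixing, say, $\k=\e/2$ and then $\d$ small enough. The \emph{main obstacle}, and the only genuinely hard content, is the pair of black boxes: transferring the analytic energy bound into an honest approximate subgroup of controlled size and mass via non-commutative Balog--Szemer\'edi--Gowers, and Helfgott's growth theorem guaranteeing that small tripling in $\PSL_2(\F_p)$ forces containment in a proper subgroup. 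These are exactly the results established in \cite{BG-prime,Hel-Sl2} and quoted here verbatim.
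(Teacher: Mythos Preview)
The paper does not give its own proof of this proposition; it simply cites \cite{BG-prime}*{Proposition 2} and \cite{Kow-Sl2}*{Theorem 4.3}, adding only the one-line observation that the symmetric-measure hypothesis present in those sources can be dropped because the statement for $\mu$ is equivalent to the statement for the symmetrized measure $\wt\mu(g)=(\mu(g)+\mu(g^{-1}))/2$. Your proposal, by contrast, sketches the actual Bourgain--Gamburd machinery (dyadic regularisation of level sets, non-commutative Balog--Szemer\'edi--Gowers to extract an approximate subgroup with controlled tripling, then Helfgott's dichotomy and a case split on $|H|$). This is precisely the argument inside the cited references, so in substance your route and the paper's citation point to the same proof; you have simply unpacked the black box that the paper deliberately leaves closed. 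One minor difference worth noting: you handle asymmetric $\mu$ directly within the BSG step (which does output a symmetric approximate group regardless of the symmetry of the input), whereas the paper reduces to the symmetric case first via $\wt\mu$.
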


A proof may be found in \cite{BG-prime}*{Proposition 2} or \cite{Kow-Sl2}*{Theorem 4.3}.
The latter reference provides explicit constants and it is also closer to our formulation, though, with
somewhat different notation.
There is one notable difference between the statement in the above references and Proposition \ref{pr:flattening},
which is that we do not assume here that the measure is symmetric, that is we allow $\mu(g)\neq\mu(g^{-1})$.
We note, however, that (at least in our formulation) the assumption of symmetricity is not necessary.
Indeed, it is easy to see that Proposition \ref{pr:flattening} holds for a (not necessarily symmetric) measure
$\mu$ if and only if it holds for the symmetrized measure
\[
\wt\mu(g)=\frac{\mu(g)+\mu(g^{-1})}{2}.
\] 

We write $\chi_S$ for the normalized characteristic function of the set $S$, that is
\[
\chi_S(x)=
\begin{cases}
\frac{1}{|S|} &\text{if $x\in S$},\\
0&\text{otherwise}.
\end{cases}
\]

The purpose of the next lemma is to verify the hypothesis of Proposition \ref{pr:flattening}
for $\mu=\chi_{S_0}^{*l}$ for suitably large $l$.

\begin{lem}\label{lm:non-concentration}
Let $\a>0$ and let $S_0\subset\PSL_2(\F_p)$ be a finite set, which satisfies the assumptions in
Proposition \ref{pr:SG-Cayley}.
Then there is a number $C>0$ depending only on $|S_0|$ such that
\[
\chi_{S_0}^{*l}(gH)\le C p^{-\a/16}
\]
for all
\[
l\ge\a\frac{\log p }{\log|S_0|}
\]
for all $g\in\PSL_2(\F_p)$ and for all
proper subgroups $H<\PSL_2(\F_p)$.
\end{lem}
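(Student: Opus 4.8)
The plan is to prove this as an escape‑from‑subgroups (non‑concentration) estimate in the spirit of Bourgain--Gamburd, driven entirely by the hypothesis of Proposition~\ref{pr:SG-Cayley}. The proper subgroups $H$ split into two classes: those on which the word $w$ is algebraically forced to vanish, and a finite list of exceptional subgroups of bounded order; these are handled by the same core computation.

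The first step is a reduction in $l$, which is needed because the lemma asks for all $l\ge\alpha\log p/\log|S_0|$ while the hypothesis of Proposition~\ref{pr:SG-Cayley} is available only for $l\le\alpha\log p/\log|S_0|$. Set $l_0=\lfloor\alpha\log p/\log|S_0|\rfloor$. For any $l\ge l_0$ and any coset $gH$, writing $\chi_{S_0}^{*l}=\chi_{S_0}^{*(l-l_0)}*\chi_{S_0}^{*l_0}$ and summing over $gH$ gives
\[
\chi_{S_0}^{*l}(gH)=\sum_{z}\chi_{S_0}^{*(l-l_0)}(z)\,\chi_{S_0}^{*l_0}(z^{-1}gH)\le\max_{g'}\chi_{S_0}^{*l_0}(g'H),
\]
so the maximal concentration does not grow with $l$, and it keeps $H$ fixed. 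Hence it suffices to bound $\chi_{S_0}^{*l_0}(gH)$, and $l_0\le\alpha\log p/\log|S_0|$ lies in the admissible range.

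The core step treats a proper subgroup $H$ that is metabelian, i.e.\ $H''=\{\pm\Id\}$ (the trivial subgroup $\{\pm\Id\}$ is included). Sample $8l_0$ elements $g_i^{(a)}$ ($1\le i\le l_0$, $1\le a\le 8$) independently and uniformly from $S_0$, and put $X_a=g_1^{(a)}\cdots g_{l_0}^{(a)}$. If all eight endpoints lie in $gH$, then each base argument $X_{2j-1}X_{2j}^{-1}$ lies in $gH(gH)^{-1}=gHg^{-1}$, a conjugate of $H$ and hence again metabelian; since $w=[[x_1x_2^{-1},x_3x_4^{-1}],[x_5x_6^{-1},x_7x_8^{-1}]]$ is a commutator of commutators, $w(X_1,\ldots,X_8)\in(gHg^{-1})''=\{\pm\Id\}$. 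The hypothesis of Proposition~\ref{pr:SG-Cayley} then produces indices $a\ne b$ whose first or second halves agree. Writing $\beta=\max_{g'}\chi_{S_0}^{*l_0}(g'H)$, the event ``all $X_a\in gH$'' has probability $\beta^8$ by independence of the eight walks and is contained in the collision event; a union bound over the $56$ ordered pairs and the two halves, using that two independent uniform samples agree with probability $|S_0|^{-1}$, yields
\[
\beta^8\le 112\,|S_0|^{-\lceil l_0/2\rceil}\le 112\,|S_0|^{-l_0/2}.
\]
Thus $\beta\le 112^{1/8}|S_0|^{-l_0/16}$, and $l_0\ge\alpha\log p/\log|S_0|-1$ converts this into $\beta\le C p^{-\alpha/16}$ with $C$ depending only on $|S_0|$; here the exponent $16=8\cdot2$ reflects the eight variables of $w$ and the half‑length collision.

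Finally, for $H$ non‑metabelian I invoke Dickson's classification of the subgroups of $\PSL_2(\F_p)$: since $p$ is prime there are no subfield subgroups, so every proper subgroup is either metabelian (cyclic, dihedral, a Borel subgroup $C_p\rtimes C_k$, or $A_4$) or isomorphic to $S_4$ or $A_5$, of order at most $60$. For such bounded $H$ I bound crudely $\chi_{S_0}^{*l}(gH)\le|H|\,\|\chi_{S_0}^{*l}\|_\infty\le 60\,\|\chi_{S_0}^{*l}\|_\infty$, and $\|\chi_{S_0}^{*l}\|_\infty=\max_g\chi_{S_0}^{*l}(g\{\pm\Id\})$ is exactly what the core step controls for the (metabelian) trivial subgroup, so this case follows after absorbing the factor $60$ into $C$. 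The main obstacle is the core step: one must match the shape of $w$---a commutator of commutators, vanishing precisely on metabelian groups---to the fact that the \emph{large} proper subgroups of $\PSL_2(\F_p)$ (Borel and dihedral) are metabelian, and then dispatch the finitely many non‑metabelian exceptions through the $L^\infty$ bound. The reduction in $l$ and the union bound are routine once this dichotomy is set up.
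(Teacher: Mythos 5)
Your proposal is correct and follows essentially the same route as the paper: the same probabilistic core (eight independent walks, the collision hypothesis from Proposition~\ref{pr:SG-Cayley}, and the exponent $1/16$), the same treatment of the bounded-order exceptional subgroups via the point bound times $60$, and the same convolution step to pass from $l_0$ to all $l\ge l_0$. The only cosmetic difference is that you phrase the key structural fact as ``large proper subgroups are metabelian'' directly from Dickson's classification, whereas the paper cites the equivalent double-commutator identity from Bourgain--Gamburd.
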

\begin{proof}
It was observed by Bourgain and Gamburd \cite{BG-prime}*{Proposition 3} that any proper subgroup
$H<\PSL_2(\F_p)$ that has order greater than $60$ satisfies the property that
\[
[[h_1,h_2],[h_3,h_4]]=\pm\Id
\]
for all tuples $h_1,\ldots,h_4\in H$.
This follows easily from the classification of the subgroups of $\PSL_2(\F_p)$
obtained by Dickson, see \cite{Suz-group-I}*{Theorem 6.25}.

We fix an element $g\in\PSL_2(\F_p)$ and a proper subgroup $H<\PSL_2(F_p)$
of order greater than $60$.
Setting
\[
w=[[x_1x_2^{-1},x_3x_4^{-1}],[x_5x_6^{-1},x_7x_8^{-1}]]\in F(x_1,\ldots,x_8),
\]
we see that
\[
w(h_1,\ldots,h_8)=\pm\Id
\]
for all tuples $h_1,\ldots,h_8\in gH$.

We take
\[
l_0=\Big\lfloor\a\frac{\log p}{\log|S_0|}\Big\rfloor
\]
and let $\{X_i^{(a)}\}_{i=1,\ldots,l_0;a=1,\ldots,8}$ be independent random elements of $S_0$
with uniform distribution.
We note that
\[
\P(X_1^{(a)}\cdots X_{l_0}^{(a)}\in gH\text{ for all $a$})=\chi_{S_0}^{*l_0}(gH)^8.
\]
On the other hand, we can write
\begin{align*}
\P(X_1^{(a)}\cdots X_{l_0}^{(a)}&\in gH\text{ for all $a$})\\
\le&\P(w(X_1^{(1)}\cdots X_{l_0}^{(1)},\ldots, X_1^{(8)}\cdots X_{l_0}^{(8)})=\pm\Id)\\
\le&2\cdot{8\choose 2}\cdot \P(X_1^{(1)}=X_1^{(2)},\ldots,X_{\lceil l_0/2\rceil}^{(1)}=X_{\lceil l_0/2\rceil}^{(2)})\\
\le& 56|S_0|^{-l_0/2}.
\end{align*}
Therefore
\[
\chi_{S_0}^{*l_0}(gH)\le 2|S_0|^{-l_0/16}.
\]

We note that the above argument is also valid for $H=\{\pm\Id\}$, so we get that
\[
\chi_{S_0}^{*l_0}(g)\le 2|S_0|^{-l_0/16}
\]
for all $g\in\PSL_2(\F_p)$.
Therefore, the bound
\[
\chi_{S_0}^{*l_0}(gH)\le 60|S_0|^{-l_0/16}.
\]
is true for all proper subgroups $H<\PSL_2(\F_p)$ including those of order not greater than $60$.

Finally, we note for all $l$, we have
\[
\chi_{S_0}^{*(l+1)}(gH)=\frac{1}{|S_0|}\sum_{s\in S_0}\chi_{S_0}^{*l}(sgH)\le\max_{h\in\PSL_2(\F_p)}\chi_{S_0}^{*l}(hH).
\]
By repeated use of this observation, we conclude that
\[
\chi_{S_0}^{*l}(gH)\le 60|S_0|^{-l_0/16}
\]
holds for all $l\ge l_0$.
\end{proof}

\begin{proof}[Proof of Proposition \ref{pr:SG-Cayley}]
We follow Bourgain and Gamburd \cite{BG-prime}.
Let
\[
l_0=\Big\lceil\a\frac{\log p}{\log |S_0|}\Big\rceil.
\]
It follows from Lemma \ref{lm:non-concentration} that we have
\[
\chi_S^{*l}(gH)\le p^{-\a/20}
\]
for all $l\ge l_0$, for all $g\in\PSL_2(\F_p)$ and for all $H<\PSL_2(\F_p)$.

We apply Proposition \ref{pr:flattening} repeatedly for $\mu=\chi_{S_0}^{*2^kl_0}$ with $k=0,1,\ldots$ and $\e=\a/20$
and we conclude that
\[
\|\chi_{S_0}^{*2^kl_0}\|_2\le C\max(p^{-{3/2}+1/20},C^kp^{-k\d}),
\]
for some constants $C,\d>0$ depending only on $\a$.
This means that there is a constant $K$ depending only $\a$ such that
\[
\|\chi_{S_0}^{*Kl_0}\|_2\le C p^{-3/2+1/20}.
\]

We now turn the estimate on $\|\chi_{S_0}^{*Kl_0}\|_2$ into an estimate on
\[
\|A_0(\PSL_2(\F_p);S_0)^{Kl_0}\|
\]
exploiting high multiplicity of eigenvalues based on an idea going back to Sarnak and Xue \cite{SX-multiplicities}.
We write
\begin{align}
\|A_0(\PSL_2(\F_p)&;S_0)^{Kl_0}\|^2\nonumber\\
=&\|(A_0(\PSL_2(\F_p);S_0)^{Kl_0})^TA_0(\PSL_2(\F_p);S_0)^{Kl_0}\|.\label{eq:adjoint}
\end{align}
We note that the eigenspaces of 
the operator
\begin{equation}\label{eq:operator}
(A_0(\PSL_2(\F_p);S_0)^{Kl_0})^TA_0(\PSL_2(\F_p);S_0)^{Kl_0}
\end{equation}
are invariant under the action of $\PSL_2(\F_p)$
from the right, hence it is a sum of non-trivial irreducible representations.
Indeed, the representations are non-trivial, because $l_0^2(\PSL_2(\F_p))$ does not contain constants.
By a result going back to Frobenius \cite{Fro-Sl2}, we know that each non-trivial irreducible representation of $\PSL_2(\F_p)$
is of dimension at least $(p-1)/2$.
It follows that the eigenvalues of \eqref{eq:operator} occur with multiplicity at least $(p-1)/2$.

Since \eqref{eq:operator} is symmetric and positive definite, its norm
equals to its largest eigenvalue.
We have therefore
\begin{align*}
\|\eqref{eq:operator}\|
\le&\frac{\Tr((A_0(\PSL_2(\F_p);S_0)^{Kl_0})^TA_0(\PSL_2(\F_p);S_0)^{Kl_0})}{(p-1)/2}\\
\le&\frac{|\PSL_2(\F_p)|\|\chi_{S_0}^{*Kl_0}\|_2^2}{(p-1)/2}\\
\le& C p^3 p^{-3+1/10}p^{-1}\\
=&Cp^{-9/10}.
\end{align*}
We combine this with \eqref{eq:adjoint} and obtain
\[
\|A_0(\PSL_2(\F_p);S_0)\|^{2Kl_0}\le \|A_0(\PSL_2(\F_p);S_0)^{Kl_0}\|^2\le Cp^{-9/10},
\]
which proves the claim after taking $1/(2Kl_0)$ powers of both sides.
\end{proof}

\section{Proof of Theorem \ref{th:main}}\label{sc:proof}

We let
$G=\PSL_2(\F_p)$, where $p$ is a sufficiently large prime and $d|(p+1)/2$.
We apply Proposition \ref{pr:girth} and let $H_1,H_2<G$ be subgroups that satisfy
the properties claimed in that proposition.
These properties already imply part (1) of the theorem and
by Lemma \ref{lm:girth} they also imply part (2).

To establish part (3) of the theorem, we first apply Lemma \ref{lm:coset2Cayley}
to get
\[
\|A_0(G;H_1,H_2)\|^2\le\|A_0(G;S)\|,
\]
where $S=H_1H_2$.
Letting $S_0=(H_1\backslash\{\pm \Id\})(H_2\backslash\{\pm \Id\})$, we can write
\[
\|A_0(G;S)\|\le\frac{|S_0|}{|S|}\|A_0(G;S_0)\|+\frac{|S|-|S_0|}{|S|}
\]
using convexity of norms.
Finally, we use Proposition \ref{pr:SG-Cayley} with $\a=1/12$
to obtain
\[
\|A_0(G;S_0)\|\le(d-1)^{-c}
\]
for some absolute constant $c>0$.
Combining our estimates, we get
\[
\|A_0(G;H_1,H_2)\|^2<\Big(1-\frac{2d-1}{d^2}\Big)(d-1)^{-c}+\frac{2d-1}{d^2}.
\]
Assuming $c\le 1$ and hence $2(d-1)^{-c}>(2d-1)/d^2$,
both of the bounds
\begin{align*}
\|A_0(G;H_1,H_2)\|^2<&3(d-1)^{-c},\\
\|A_0(G;H_1,H_2)\|^2<&\frac{1}{2}(d-1)^{-c}+\frac{1}{2}
\end{align*}
follow.
If $(d-1)^{-c}<1/10$ we can conclude part (3) of the theorem from the first bound
and in the opposite case, we can conclude it from the second bound.

\bibliography{bibfile}

\bigskip

\noindent{\sc Centre for Mathematical Sciences,
Wilberforce Road, Cambridge CB3 0WA,
UK}\\
{\em e-mail address:} pv270@dpmms.cam.ac.uk

\end{document}